\theoremstyle{plain}
\newtheorem{theorem}{Theorem}
\newtheorem{proposition}[theorem]{Proposition}
\newtheorem{lemma}[theorem]{Lemma}
\theoremstyle{definition}
\newtheorem{definition}[theorem]{Definition}
\newtheorem{remark}[theorem]{Remark}
\newtheorem{notation}[theorem]{Notation}
\theoremstyle{remark}
\newtheorem{claim}[theorem]{Claim}
\DeclareMathOperator{\lcm}{lcm}
\begin{document}

\begin{frontmatter}
\title{Vertex-Pancyclism in the Generalized Sum of Digraphs}
\tnotetext[t1]{This research was supported by grants CONACYT FORDECYT-PRONACES CF-2019/39570 and UNAM-DGAPA-PAPIIT IN102320.}

\author[imunam]{Narda Cordero-Michel\corref{cor1}}
\ead{narda@matem.unam.mx}
\author[imunam]{Hortensia Galeana-S\'{a}nchez}
\ead{hgaleana@matem.unam.mx}

\cortext[cor1]{Corresponding author}
\address[imunam]{Instituto de Matem\'aticas, Universidad Nacional Aut\'onoma de M\'exico, Ciudad Universitaria, CDMX, 04510, M\'exico}


\begin{abstract}
A digraph $D=(V(D)$, $A(D))$ of order $n\geq 3$ is \emph{pancyclic}, whenever $D$ contains a directed cycle of length $k$ for each $k\in \{3,\ldots,n\}$; and $D$ is \emph{vertex-pancyclic} iff, for each vertex $v\in V(D)$ and each  $k\in \{3,\ldots,n\}$, $D$ contains a directed cycle of length $k$ passing through $v$. 

Let $D_1$, $D_2$,  \ldots, $D_k$ be a collection of pairwise vertex disjoint digraphs. The \emph{generalized sum} (g.s.) of $D_1$, $D_2$, \ldots, $D_k$, denoted by $\oplus_{i=1}^k D_i$ or $D_1\oplus D_2 \oplus \cdots \oplus D_k$, is the set of all digraphs $D$ satisfying:
(i) $V(D)=\bigcup_{i=1}^k V(D_i)$,
(ii) $D\langle V(D_i) \rangle \cong D_i$ for $i=1,2,\ldots, k$, and
(iii) for each pair of vertices belonging to different summands of $D$, there is exactly one arc between them,
with an arbitrary but fixed direction.
A digraph $D$ in $\oplus_{i=1}^k D_i$ will be called a \emph{generalized sum} (g.s.) of $D_1$, $D_2$,  \ldots, $D_k$.

Let $D_1$, $D_2$, \ldots, $D_k$ be a collection of $k$ pairwise vertex disjoint Hamiltonian digraphs, in this paper we give simple sufficient conditions for a digraph $D\in \oplus_{i=1}^k D_i$ be vertex-pancyclic. 
This result extends a result obtained by Cordero-Michel, Galeana-Sánchez and  Goldfeder in 2016.
\end{abstract}

\begin{keyword}
digraph, generalizations of tournaments, pancyclic digraph
\end{keyword}

\end{frontmatter}

\section{Introduction}
\label{sec:introduction}
Given a digraph $D=(V(D),A(D))$, the problem of determining  if $D$ becomes a pancyclic or a vertex-pancyclic digraph, has been studied by several authors, see by example \cite{Bang-Jensen1999313, Bang-Jensen2009, Bang-Jensen1995141, Gutin1995153, Moon1966297, Randerath2002219, Thomassen197785}.

A digraph $D$ is said to be a \emph{tournament} (respectively, a \emph{semicomplete digraph}) whenever for each pair of different vertices, there is exactly one arc (resp. at least one arc) between them. A \emph{$k$-hypertournament} $H$ on $n$ vertices, where $2\leq k\leq n$, is a pair $H=(V_H, A_H)$, where $V_H$ is the vertex set of $H$ and $A_H$ is a set of $k$-tuples of vertices such that, for all subsets $S\subseteq V_H$ with $\vert S\vert=k$, $A_H$ contains exactly one permutation of $S$.
In \cite{Moon1966297}, Moon proved that every strong tournament is vertex-pancyclic; in \cite{Bang-Jensen2009}, Bang-Jensen and Gutin state that this result can be extended for strong semicomplete digraphs; and in \cite{Li20132749}, Li \emph{et al.} extended Moon's result to $k$-hypertournaments on $n$ vertices, where $3\leq k\leq n-2$. 

Other results on pancyclism involve large degrees of the vertices. 
For example in \cite{Randerath2002219}, Randerath \textit{et al.} proved that every digraph $D$ on $n\geq 3$ vertices for which $\min\{\delta^+(D),\delta^-(D)\} \geq \frac{n+1}{2}$ is vertex-pancyclic. 
In \cite{Thomassen197785}, Thomassen proved that if $D$ is a strong digraph on $n$ vertices, such that $d(x) + d(y) \geq 2n$ is satisfied for each pair of non-adjacent vertices $x$ and $y$, then either $D$ has directed cycles of all lengths 2, 3, \ldots, $n$, or $D$ is a tournament (in which case it has cycles of all lengths 3, 4, \ldots, $n$), or $n$ is even and D is isomorphic to a complete bipartite digraph whose partition sets have $n/2$ vertices.
Continuing in this direction, in \cite{Bang-Jensen1999313},  Bang-Jensen and Guo proved that any digraph $D$ with no symmetric arcs, $n\geq 9$, minimum degree $n-2$ and such that for each pair of non-adjacent vertices $x$ and $y$ the inequality $d^+_D(x)+d^-_D(y)\geq n-3$ holds, is vertex-pancyclic. 


A digraph $D$ is called a \emph{complete $k$-partite} or \emph{multipartite digraph} (CMD) if its vertex set can be partitioned into $k$ stable  sets (\emph{partite sets}) and for any two vertices $x$ and $y$ in different partite sets either
$(x, y)$ or $(y, x)$ (or both) is an arc (are arcs) of $D$. A CMD digraph $D$ is called \emph{ordinary} if for any pair $X$, $Y$ of its partite sets, the set of arcs with both end vertices in $X\cup Y$ coincides with $X\times Y=\{(x,y): x\in X, y\in Y\}$ or $Y\times X$ or $(X \times Y) \cup (Y \times X)$. 

An ordinary CMD $D$ is called a \emph{zigzag digraph} if it has more than four vertices and $k\geq 3$ partite sets $V_1$, $V_2$, $V_3$, \ldots, $V_k$ such that $A(V_2, V_1)=A(V_i, V_2)=A(V_1 , V_i)=\emptyset$ for any $i\in\{3, 4,\ldots, k\}$, $|V_1|=|V_2|=|V_3|+|V_4|+\cdots+|V_k|$. 

In \cite{Gutin1995153}, Gutin characterized pancyclic and vertex-pancyclic ordinary complete $k$-partite digraphs:
\begin{theorem}[Gutin \cite{Gutin1995153}]
\begin{enumerate}[(1)] 
\item An ordinary complete $k$-partite digraph ($k\geq 3$) $D$ is pancyclic if and only if:
\begin{enumerate}[(i)] 
\item $D$ is strongly connected;
\item it has a spanning subdigraph consisting of a family of vertex disjoint cycles;
\item it is neither a zigzag digraph nor a 4-partite tournament with at least five vertices. 
\end{enumerate}
\item A pancyclic ordinary complete $k$-partite digraph D is vertex-pancyclic if and only if either:
\begin{enumerate}[(i)] 
\item $k > 3$ or
\item $k=3$ and $D$ has two 2-cycles $Z_1$, $Z_2$ such that $V(Z_1)\cup V(Z_2)$ contains vertices in exactly three partite sets.
\end{enumerate}
\end{enumerate}
\end{theorem}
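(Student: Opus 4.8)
The plan is to run every argument through the \emph{ordinary} hypothesis, which pins down all short adjacency patterns: whenever three vertices $u,v,w$ lie in pairwise distinct partite sets, or even just $u,w$ in one part and $v$ in another, the arcs among them are completely forced by the rule that between parts $X,Y$ the arc set is exactly $X\times Y$, $Y\times X$, or $(X\times Y)\cup(Y\times X)$. This is what makes local cycle surgery (inserting or deleting a single vertex) controllable, and I would use it as the engine for both parts.

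For the necessity in (1), conditions (i) and (ii) are immediate, since a pancyclic digraph contains a cycle of length $n$, and a Hamiltonian cycle is at once a certificate of strong connectivity and a spanning family of vertex-disjoint cycles. For (iii) I would verify directly that the two excluded families are not pancyclic. In a zigzag digraph the vanishing arc-sets $A(V_2,V_1)=A(V_i,V_2)=A(V_1,V_i)=\emptyset$ force a rigid block-cyclic flow $V_1\Rightarrow V_2\Rightarrow (V_3\cup\cdots\cup V_k)\Rightarrow V_1$, and combined with $|V_1|=|V_2|=|V_3|+\cdots+|V_k|$ this constrains the admissible cycle lengths enough to omit one value in $\{3,\dots,n\}$; for a $4$-partite tournament on at least five vertices one repeats the known cycle count exhibiting a missing length. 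These two computations are self-contained and do not interact with the rest of the proof.

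The sufficiency in (1) is the core. I would first secure a Hamiltonian cycle $C_n$ from (i) and (ii) by the cycle-factor-plus-strong-connectivity criterion for ordinary CMDs (merging the cycles of the given factor along connecting arcs), and then prove \emph{downward extendability}: from any cycle $C$ of length $\ell$ with $4\le\ell\le n$ one can extract a cycle of length $\ell-1$. Iterating from $C_n$ down to a $3$-cycle yields all lengths. For a single step, consider three consecutive vertices $x_{i-1},x_i,x_{i+1}$ on $C$; if $x_{i-1}$ and $x_{i+1}$ fall in distinct parts, the ordinary hypothesis provides the arc $(x_{i-1},x_{i+1})$, and deleting $x_i$ shortens the cycle by one. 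The real work is to show that an index $i$ admitting this shortcut always exists: one counts the consecutive pairs that land in a common part and shows, via a chord-counting and averaging argument, that failure for every $i$ forces exactly a zigzag or a small $4$-partite-tournament pattern, which (iii) has excluded.

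This blocking analysis is where I expect the main difficulty to lie: the configurations in which every consecutive-vertex shortcut is simultaneously obstructed are precisely the exceptional digraphs, so the delicate part is a tight case distinction proving that anything outside (iii) always offers an admissible deletion (and, dually when building cycles upward from a $3$-cycle, an admissible insertion). Once part (1) is in hand, part (2) follows by localizing the surgery at a fixed vertex $v$. If $k>3$, the presence of at least four parts guarantees that around any cycle through $v$ one can always select consecutive vertices in distinct parts disjoint from $v$, so every shortcut can be chosen to retain $v$; this produces cycles of all lengths through $v$, giving vertex-pancyclicity. If $k=3$, the hypothesized pair of $2$-cycles $Z_1,Z_2$ spanning all three parts supplies the short cycles needed to seed the local extension at $v$; conversely, when $k=3$ and no such pair exists, I would exhibit a vertex trapped by the three-block structure together with a length (typically $3$ or $4$) that cannot pass through it, establishing the necessity of the condition.
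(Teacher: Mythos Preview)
The paper does not prove this theorem. It is quoted in the Introduction as a result of Gutin \cite{Gutin1995153}, purely as background to situate the authors' own problem; no argument, sketch, or reference to a proof method appears anywhere in the paper. There is therefore nothing to compare your proposal against.

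That said, your sketch contains a genuine gap independent of this. In the downward-extendability step you write that if $x_{i-1}$ and $x_{i+1}$ lie in distinct parts then ``the ordinary hypothesis provides the arc $(x_{i-1},x_{i+1})$''. This is false: the ordinary hypothesis says only that the arc set between parts $X$ and $Y$ is one of $X\times Y$, $Y\times X$, or $(X\times Y)\cup(Y\times X)$, so it is entirely possible that only $(x_{i+1},x_{i-1})$ is present and the desired shortcut does not exist. Your counting argument would then have to rule out not just the case where $x_{i-1}$ and $x_{i+1}$ share a part for every $i$, but also the case where the orientation is wrong for every $i$, and this second obstruction is exactly where the zigzag configurations arise. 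The actual proof in Gutin's paper has to handle this orientation issue carefully; your outline, as written, assumes it away.
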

Notice that a g.s. of digraphs with no arcs, where all exterior arcs between two summands have the same direction, is an ordinary CMD. 
In our results we also work with a partition of the vertex set of a digraph, but instead of asking that each partite set be independent we ask for each partite set to have a Hamiltonian cycle; we allow that the arcs between two partite sets have any direction and we forbid symmetric arcs. In this way, our problem has similarities with Gutin's problem but they are different problems.   



In \cite{Bang-Jensen1995141}, Bang-Jensen and Huang characterized pancyclic and vertex-pancyclic quasi-transitive digraphs. 
In \cite{Yeo1999137}, Yeo proved that every regular multipartite tournament with at least 5 partite sets is vertex-pancyclic and, in \cite{Yeo2007949}, he proved that all regular 4-partite tournaments with at least 13918 vertices are vertex-pancyclic.

In \cite{Cordero-Michel20161763}, we proved that a strong generalized sum of $k$ vertex disjoint Hamiltonian graphs is Hamiltonian. Moreover, given the relationship between digraphs and 2-edge-colored graphs, in \cite{Cordero-Michel2020} we adapted the definition of generalized sum of digraphs to 2-edge-colored graphs and we obtained sufficient conditions for a colored generalized sum of $k$ vertex disjoint 2-edge-colored graphs, each containing an alternating Hamiltonian cycle, to be a vertex alternating-pancyclic graph, this is, to contain an alternating cycle of each even length between 4 and the number of vertices of the graph.
In this paper we give simple sufficient conditions for a  generalized sum of $k$ vertex disjoint Hamiltonian digraphs to be a vertex-pancyclic digraph, in this way we extend the result of \cite{Cordero-Michel20161763} and we obtain a result on vertex-pancyclism for digraphs, where cycles of odd and even lengths are considered. 

\section{Definitions}
\label{sec:definitions}
In this paper $D=(V(D),A(D))$ will denote a \emph{digraph}. An arc $(u,v)\in A(D)$ will also be denoted by $u\to v$. Two different vertices $u$ and $v$ are adjacent if $u\to v$ or $v\to u$. 
Let $A$ and $B$ be two sets of vertices or subdigraphs of a digraph $D$, we define the set of arcs $(A,B)$, as the set of all arcs with tail in $A$ (or in the vertex set of $A$) and head in $B$ (or in the vertex set of $B$). If $A=\{a\}$ or $B=\{b\}$, we use the notation $(a,B)$ or $(A,b)$,  respectively, instead of $(A,B)$. 
Also, we denote by $A\to B$ whenever for each vertex $a$ in $A$ and each vertex $b$ in $B$ we have $a\to b$, and we denote by $A\mapsto B$ whenever $A\to B$ and $(B,A)$ is empty.  If $A=\{a\}$ or $B=\{b\}$, we use the notation $a\to B$ or $A\to b$,  respectively, instead of $A\to B$ and $a\mapsto B$ or $A\mapsto b$,  respectively, instead of $A\mapsto B$. 

The subdigraph induced by a set of vertices $U\subseteq V(D)$ will be denoted by 
$ D\langle U\rangle$;
and if $H$ is a subdigraph of $D$, the subdigraph induced by $V(H)$ will be denoted simply by 
$D\langle H\rangle$. 

A digraph is \emph{strong} whenever for each pair of different vertices $u$ and $v$, there exist a $uv$-path and a $vu$-path.

A \emph{spanning subdigraph} $E$ of $D$ is a subdigraph of $D$ such that $V(E)=V(D)$. We say that $E$ spans $D$.

Our paths and cycles are always directed. A \emph{cycle-factor} of a digraph $D$ is a collection $\mathcal{F}$ of pairwise vertex disjoint cycles in $D$ such that each vertex of $D$ belongs to a member of $\mathcal{F}$. A cycle-factor consisting of $k$ cycles is a \emph{$k$-cycle-factor}. We denote $\mathcal{F}$ by $\mathcal{F}=C_1 \cup \cdots \cup C_k$. 

For further details we refer the reader to \cite{Bang-Jensen2009}.

\begin{definition}
\label{definition g.s.}
Let $D_1$, $D_2$,  \ldots, $D_k$ be a collection of pairwise vertex disjoint digraphs. The \emph{generalized sum} (g.s.) of $D_1$, $D_2$, \ldots, $D_k$, denoted by $\oplus_{i=1}^k D_i$ or $D_1\oplus D_2 \oplus \cdots \oplus D_k$, is the collection of all digraphs $D$ satisfying:
(i) $V(D)=\bigcup_{i=1}^k V(D_i)$,
(ii) $D\langle V(D_i) \rangle \cong D_i$ for $i=1,2,\ldots, k$; and
(iii) between each pair of vertices in different summands of $D$ there is exactly one arc, with an arbitrary but fixed direction.

Let $D_1$, $D_2$, \ldots, $D_k$ be a collection of vertex disjoint digraphs. A digraph $D$ in $\oplus_{i=1}^k D_i$ will be called a \emph{generalized sum} (g.s.) and we will say that $e\in A(D)$ is an \emph{interior arc} of $D$ iff $e\in \bigcup_{i=1}^k A(D_i)$ and an \emph{exterior arc} of $D$ otherwise. The set of the exterior arcs of $D$ will be denoted by $E_\oplus$.
\end{definition}

\begin{remark}
Clearly the g.s. of two vertex disjoint digraphs if well defined and is commutative.
Let $D_1$, $D_2$, $D_3$ be three vertex disjoint digraphs. It is easy to see that the sets $(D_1\oplus D_2)\oplus D_3$ defined as $\bigcup_{D\in D_1\oplus D_2} D\oplus D_3 $ and $D_1 \oplus (D_2\oplus D_3)$ defined as $\bigcup_{D'\in D_2\oplus D_3} D_1\oplus D'$ are equal, thus $\oplus_{i=1}^3 D_i=(D_1\oplus D_2)\oplus D_3 = D_1\oplus(D_2\oplus D_3)$ is well defined. By means of an inductive process it is easy to see that the g.s. of $k$ vertex disjoint digraphs is well defined, and is associative and  commutative.
\end{remark}

\begin{notation}
Let $k_1$ and $k_2$ be two positive integers, where $k_1\leq k_2$. We will denote by $[k_1,\ k_2]$ the set of integers $\{k_1$, $k_1+1$, \ldots, $k_2\}$.
\end{notation}

\begin{remark}[\cite{Cordero-Michel20161763}]
\label{remark subsuma}
Let $D_1$, $D_2$, \ldots, $D_k$ be a collection of pairwise vertex disjoint digraphs, $D\in\oplus_{i=1}^k D_i$ and $J\subset [1, k]$. The induced subdigraph of $D$ by $\bigcup_{j\in J} V(D_j)$, $H=D\langle \bigcup_{j\in J} V(D_j) \rangle$, belongs to the g.s. of $\{D_j\}_{j\in J}$.
\end{remark}

\begin{figure}
\begin{center}   
\includegraphics[scale=0.9]{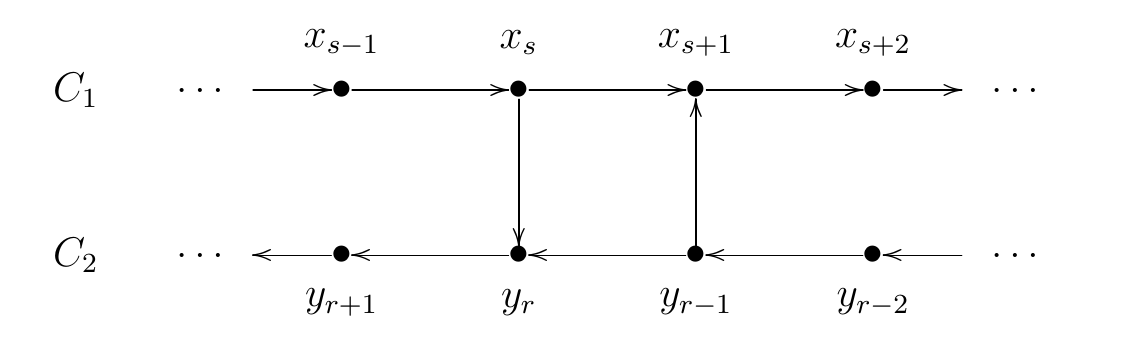}
\caption{A good pair of arcs.}
\label{fig:good pair}
\end{center} 
\end{figure}

\begin{definition}
Let $D$ be a digraph and let $C_1=(x_0$, $x_1$, \ldots, $x_{n-1}$, $x_0)$ and $C_2=(y_0$, $y_1$, \ldots, $y_{m-1}$, $y_0)$ be two vertex disjoint cycles in $D$. A pair of arcs $x_s\to y_r$, $y_{r-1}\to x_{s+1}$ with $s\in [0,\ n-1]$, $r\in [0,\ m-1]$ (subscripts are taken modulo $n$ and $m$, respectively),  is  a \emph{good pair of arcs} (Figure \ref{fig:good pair}).

Whenever there is a good pair of arcs between two vertex disjoint cycles $C_1$ and $C_2$, we simply say that there is a good pair.
\end{definition}

\section{Preliminary results}

In this section we prove properties of strong digraphs in the g.s. of two Hamiltonian digraphs which allow us to find cycles of several lengths.

\begin{proposition}[Galeana-S\'{a}nchez and Goldfeder \cite{Galeana2014315}]
\label{propo merging cycles with a good pair of arcs}
Let $C_1$ and $C_2$ be two disjoint cycles in a digraph $D$. If there is a good pair between them, then there is a cycle with vertex set $V(C_1)\cup V(C_2)$ \textnormal{(Figure \ref{fig:good pair of arcs})}.
\end{proposition}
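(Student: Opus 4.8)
The plan is to give a direct, constructive proof by exhibiting an explicit cycle on $V(C_1)\cup V(C_2)$, assembled from the two given cycles and the two arcs of the good pair. Write $C_1=(x_0, x_1, \ldots, x_{n-1}, x_0)$ and $C_2=(y_0, y_1, \ldots, y_{m-1}, y_0)$, and let the good pair be the arcs $x_s\to y_r$ together with $y_{r-1}\to x_{s+1}$, with subscripts read modulo $n$ and $m$ respectively. The observation that drives the whole argument is that the good pair is precisely engineered so that its two crossing arcs match the cyclic successor/predecessor structure of the two cycles: leaving $C_1$ at $x_s$ and re-entering $C_1$ at its successor $x_{s+1}$ makes it possible to traverse all of $C_1$, while entering $C_2$ at $y_r$ and leaving from its predecessor $y_{r-1}$ makes it possible to traverse all of $C_2$.

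Concretely, I would start at $x_{s+1}$ and follow the arcs of $C_1$ once around, passing through $x_{s+2}, \ldots, x_{n-1}, x_0, \ldots, x_s$, thereby visiting every vertex of $C_1$ exactly once and ending at $x_s$. Next I would use the crossing arc $x_s\to y_r$ to move onto $C_2$, follow the arcs of $C_2$ once around from $y_r$ through $y_{r+1}, \ldots, y_{m-1}, y_0, \ldots, y_{r-1}$, visiting every vertex of $C_2$ exactly once and ending at $y_{r-1}$. Finally I would close the walk with the second crossing arc $y_{r-1}\to x_{s+1}$. This produces the closed walk
$$x_{s+1}\to \cdots \to x_s \to y_r \to \cdots \to y_{r-1}\to x_{s+1},$$
all of whose arcs lie in $A(D)$: the interior arcs come from $C_1$ and $C_2$, and the two crossing arcs come from the good pair.

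It then remains to verify that this closed walk is genuinely a cycle, that is, that no vertex is repeated. Since $C_1$ and $C_2$ are vertex disjoint, and since each of the two arc-traversals above runs exactly once around a cycle of length $n$ (respectively $m$) and so lists each of its vertices exactly once, the walk visits each of the $n+m$ vertices of $V(C_1)\cup V(C_2)$ exactly once before returning to its start $x_{s+1}$. Hence it is a cycle with vertex set $V(C_1)\cup V(C_2)$, as required. I do not anticipate a real obstacle: the entire content of the proof is the construction itself, and the only point demanding care is the bookkeeping of the indices modulo $n$ and $m$, confirming that the successor $x_{s+1}$ and predecessor $y_{r-1}$ singled out by the good pair are exactly the endpoints needed to make both full cyclic traversals close up correctly.
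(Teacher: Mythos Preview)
Your construction is correct and is precisely the one the paper has in mind: the paper does not give a written proof of this proposition (it is quoted from \cite{Galeana2014315}) but simply points to Figure~\ref{fig:good pair of arcs}, which depicts exactly the cycle $(x_{s+1}, C_1, x_s)\cup(x_s,y_r)\cup(y_r, C_2, y_{r-1})\cup(y_{r-1},x_{s+1})$ you describe.
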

%

\begin{figure}
\begin{center}   
\includegraphics[scale=0.9]{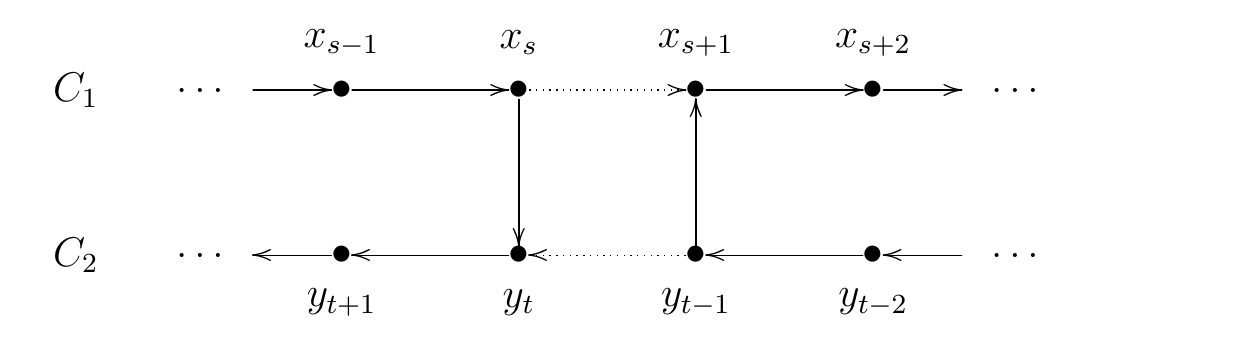}
\caption{A cycle using a good pair of arcs.}
\label{fig:good pair of arcs}
\end{center} 
\end{figure}
 
\vspace{6mm}
In what follows, the subscripts will be taken modulo $n$ (resp. modulo $m$) for vertices in $C_1 = (x_0$, $x_1$, \ldots,  $x_{n-1}$, $x_0)$ (resp. in $C_2 = (y_0$, $y_1$, \ldots, $y_{m-1}$, $y_0)$).
\vspace{6mm}

\begin{definition}
\label{definition parallel}
\begin{sloppypar}
Let $D_1$ and $D_2$ be two vertex disjoint digraphs with Hamiltonian cycles $C_1=(x_0$, $x_1$, \ldots, $x_{n-1}$, $x_0)$ and $C_2=(y_0, y_1, \ldots, y_{m-1}, y_0)$, respectively, and let $D\in D_1\oplus D_2$ be a generalized sum. For an exterior arc $x_s\to y_t$ (respectively $y_t\to x_s$), we define the \emph{parallel class} of $x_s\to y_t$ (resp. $y_t\to x_s$), denoted by $P_{x_s\to y_t}$ (resp. $P_{y_t\to x_s}$), as follows:  $P_{x_s\to y_t}=\{x_{s+i}\to y_{t-i}\ |\ 0\leq i\leq l-1\}$ (resp. $P_{y_t\to x_s}=\{y_{t+i}\to x_{s-i}\ |\ 0\leq i\leq l-1\}$), where $l=\lcm(n,\ m)$; and two exterior arcs $a_1, a_2$ will be named \emph{parallel} whenever there exists an exterior arc $z\to w$ such that $\{a_1, a_2\} \subset P_{z\to w}$.
\end{sloppypar}
\end{definition}

\begin{remark}
\label{remark: existence parallel class}
Let $D_1$ and $D_2$ be two vertex disjoint digraphs with Hamiltonian cycles $C_1=(x_0$, $x_1$, \ldots, $x_{n-1}$, $x_0)$ and $C_2=(y_0, y_1, \ldots, y_{m-1}, y_0)$, respectively, and let $D\in D_1\oplus D_2$ be a generalized sum. If $D$ has no good pair, then for each exterior arc $x_s\to y_t$ (resp. $y_t\to x_s$),\footnote{Recall that, by definition, there is exactly one arc between $x_s$ and $y_t$. So, $x_s\to y_t$ and $y_t\to x_s$ cannot exist simultaneously.} $x_s\in V(C_1)$ and $y_t\in V(C_2)$, $P_{x_s\to y_t}$ (resp. $P_{y_t\to x_s}$) exists and is contained in $D$. 
\end{remark} 
\begin{proof}
We construct a sequence of exterior arcs as follows: (i) Since $x_s\to y_t$ (resp. $y_t\to x_s$) is an arc in $D$ and $D$ has no good pair, we have that $x_{s+1}\to y_{t-1}$ (resp. $y_{t+1}\to x_{s-1}$). (ii) Assume that $x_s\to y_t$, $x_{s+1}\to y_{t-1}$, \ldots, $x_{s+i}\to y_{t-i}$ (resp. $y_t\to x_s$, $y_{t+1}\to x_{s-1}$, \ldots, $y_{t+i}\to x_{s-i}$) have been constructed. (iii) Now, as $D$ has no good pair, we have $x_{s+(i+1)}\to y_{t-(i+1)}$ (resp. $y_{t+(i+1)}\to x_{s-(i+1)}$). (iv) The sequence finishes the first time that $x_{s+k}\to y_{t-k}=x_s\to y_t$ (resp. $y_{t+k}\to x_{s-k}=y_t\to x_s$). That is when $k=\lcm(n,\ m)$. 
\end{proof}

Notice that, since $l=\lcm(n,\ m)$ we have that for each vertex $w\in V(C_1)\cup V(C_2)$ and each $(u, v)\in E_\oplus$, there exists at least one arc in the parallel class $P_{u\to v}$  incident with $w$.

\begin{remark}
\label{remark parallel class}
Let $D_1$ and $D_2$ be two vertex disjoint digraphs with Hamiltonian cycles $C_1=(x_0$, $x_1$, \ldots, $x_{n-1}$, $x_0)$ and $C_2=(y_0, y_1, \ldots, y_{m-1}, y_0)$, respectively, and let $D\in D_1\oplus D_2$ be a g.s. such that $D$ has no good pair.
As a consequence of Definition \ref{definition parallel} and Remark \ref{remark: existence parallel class}, we have that for any $(u,v)\in E_\oplus$, the following assertions hold:  
(i) $(u,v)\in P_{u\to v}$; (ii) $\left\vert P_{u\to v} \right\vert=\lcm(n,\ m)$; (iii) if $P_{u\to v}\cap P_{w\to z}\neq \emptyset$, then $P_{u\to v}= P_{w\to z}$; (iv) if $u\in V(C_i)=V(D_i)$ and $v\in V(C_{3-i})=V(D_{3-i})$, then $P_{u\to v}\subseteq (D_i,D_{3-i})$.

Observe that, whenever $n\neq m$ it may be more than one arc in $P_{u\to v}$ incident with $u$ or $v$.
\end{remark}

\begin{figure}
\begin{center}   
\includegraphics[scale=0.9]{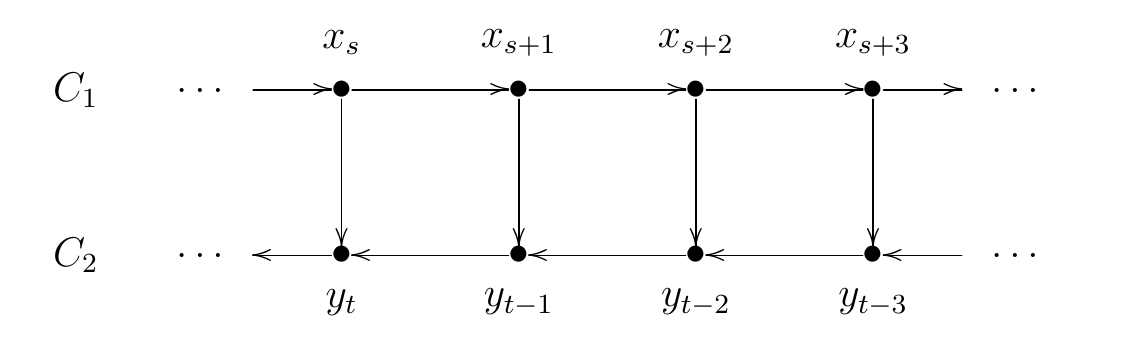}
\caption{The parallel class of $x_s\to y_t$.}
\label{fig:parallel class}
\end{center} 
\end{figure}

\begin{notation}
Let $C$ be a cycle, for each $v\in V(C)$, we will denote by $v^+$ the \emph{successor} of $v$ in $C$ (resp. $v^-$ the \emph{predecessor} of $v$ in $C$), this is, the vertex in $C$ such that $(v,v^+)\in A(C)$  (resp. $(v^-,v)\in A(C)$).
If more that one cycle contains $v$, we will write $v^{+_C}$ (resp. $v^{-_C}$). 
\end{notation}

\begin{notation}
Let $D_1$, $D_2$, \ldots, $D_k$ be a collection of pairwise vertex disjoint digraphs and take $D$ in $\oplus_{i=1}^k D_i$. For each $v\in V(D)$, we will denote by $d_\oplus^+(v)$ (resp. $d_\oplus^{-}(v)$) the number of exterior arcs in $D$ whose tail (resp. head) is $v$.
\end{notation}

The next three results will help us to prove Theorem \ref{theorem strong and no good pair then vertex pancyclic}, which asserts that, if $D$ is a strong digraph in the g.s. of two Hamiltonian digraphs, $D_1$ and $D_2$, and $D$ has no good pair of arcs between their Hamiltonian cycles, then $D$ is vertex-pancyclic.

\begin{lemma}
\label{lemma non-singular vertices, directed triangle}
Let $D_1$ and $D_2$ be two digraphs with Hamiltonian cycles, $C_1=(x_0$, $x_1$, \ldots, $x_{n-1}$, $x_0)$ and $C_2=(y_0$, $y_1$, \ldots, $y_{m-1}$, $y_0)$, respectively, and $D\in D_1\oplus D_2$. If $D$ is strong and has no good pair, then there exist $r\in [0,\ m-1]$ and $s\in[0,\ n-1]$ such that $(x_0$, $y_r$, $y_{r+1}$, $x_0)$ and $(y_0$, $x_s$, $x_{s+1}$, $y_0)$ are 3-cycles.
\end{lemma}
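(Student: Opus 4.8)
The plan is to prove the existence of the two $3$-cycles separately; by the symmetry between $D_1$ and $D_2$ it suffices to produce $(x_0, y_r, y_{r+1}, x_0)$, and the cycle $(y_0, x_s, x_{s+1}, y_0)$ will follow by interchanging the roles of the two summands and of $x_0$ and $y_0$. First I would fix the vertex $x_0$ and partition the vertices of $C_2$ according to the direction of the unique exterior arc joining each of them to $x_0$, writing $Y^+ = \{y_j : x_0 \to y_j\}$ and $Y^- = \{y_j : y_j \to x_0\}$. Since $y_r \to y_{r+1}$ is an arc of $C_2$ for every $r$, a $3$-cycle of the form $(x_0, y_r, y_{r+1}, x_0)$ exists exactly when there is an index $r$ with $y_r \in Y^+$ and $y_{r+1} \in Y^-$. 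Because $V(C_2) = Y^+ \cup Y^-$ is a partition of the vertex set of a cycle, such a consecutive pair (along the direction of $C_2$) is guaranteed as soon as both $Y^+$ and $Y^-$ are nonempty: traversing $C_2$ one must leave $Y^+$ for $Y^-$ at least once. Thus the whole statement reduces to showing that $x_0$ has at least one out-neighbor and at least one in-neighbor among the vertices of $C_2$.

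This is where strong connectivity and the absence of good pairs come together. Since $D$ is strong, any path from a vertex of $D_1$ to a vertex of $D_2$ must cross from one summand to the other, so $(D_1, D_2)$ contains some exterior arc $x_a \to y_b$; symmetrically, a path from $D_2$ to $D_1$ shows that $(D_2, D_1)$ contains some exterior arc $y_c \to x_d$. Now I would invoke the parallel classes, which exist because $D$ has no good pair (Remark \ref{remark: existence parallel class}). By the observation following that remark, $P_{x_a \to y_b}$ contains an arc incident with $x_0$, and by Remark \ref{remark parallel class}(iv) every arc of $P_{x_a \to y_b}$ lies in $(D_1, D_2)$; since $x_0 \in V(D_1)$, this incident arc must have $x_0$ as its tail, giving $x_0 \to y_{b'}$ for some $b'$ and hence $Y^+ \neq \emptyset$. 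Applying the same reasoning to $P_{y_c \to x_d} \subseteq (D_2, D_1)$ produces an arc with $x_0$ as its head, so $y_{c'} \to x_0$ for some $c'$ and $Y^- \neq \emptyset$. With both parts nonempty the cyclic argument above yields the required index $r$, and the symmetric treatment of $C_1$ relative to $y_0$ yields $s$.

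The hard part will be precisely the nonemptiness of both $Y^+$ and $Y^-$, and I expect this to be the only genuinely delicate point: strong connectivity by itself does not force $x_0$ to have both an out-arc and an in-arc to $C_2$, since a priori all exterior arcs at $x_0$ could point the same way while $D$ remains strong through the exterior arcs incident with the other vertices of $D_1$. It is the no-good-pair hypothesis, acting through the propagation of a single exterior arc to an entire parallel class that reaches every vertex, that upgrades ``some exterior arc between the summands in each direction'' to ``an exterior arc in each direction incident with $x_0$.'' Once that upgrade is secured, everything else is an elementary pigeonhole on the cycle $C_2$ (and, symmetrically, on $C_1$).
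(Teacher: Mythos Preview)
Your proposal is correct and follows essentially the same approach as the paper: both arguments use strong connectivity to get exterior arcs in each direction between the summands, invoke the parallel classes (via Remark~\ref{remark: existence parallel class} and Remark~\ref{remark parallel class}(iv)) to transport those arcs so that $x_0$ has both an out-neighbor and an in-neighbor in $C_2$, and then finish with an elementary argument on the cycle $C_2$. The only cosmetic difference is that the paper locates the required index by taking the minimum $t$ with $(y_{a+t},x_0)\in A(D)$, while you phrase the same step as a pigeonhole on the partition $Y^+\cup Y^-$ of $V(C_2)$; these are the same idea.
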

\begin{proof}
Since $D$ is strong, $(D_1,D_2)$ and $(D_2,D_1)$ are both non-empty.
Let $(u_0,v_0)\in (D_1,D_2)$ and $(z_0,w_0)\in (D_2,D_1)$. As there is no good pair, Remark \ref{remark: existence parallel class} implies that we can construct the parallel classes, $P_{u_0\to v_0}$ and $P_{z_0\to w_0}$. By Remark \ref{remark: existence parallel class}, for each vertex $w\in V(D)$ and each parallel class $P_{u\to v}$ there exists an arc $(u',v')\in P_{u\to v}$ incident with $w$. In particular, this holds for $x_0$,  $P_{u_0\to v_0}$ and $P_{z_0\to w_0}$. So, there are arcs $(u',v')\in P_{u_0\to v_0}$ and $(z',w')\in P_{z_0\to w_0}$, both incident with $x_0$. By Remark \ref{remark parallel class} (iv), we have that $(u',v')\in (D_1,D_2)$ and $(z',w')\in (D_2,D_1)$ as $(u_0,v_0)\in (D_1,D_2)$ and $(z_0,w_0)\in (D_2,D_1)$. Then $(u',v')=(x_0, y_a)$ and $(z',w')=(y_b,x_0)$ for some $\{a, b\}\subset [0,\ m-1]$.

In this way, the set $\{j\in[1,\ m-1]\ |\ (y_{a+j},x_0)\in A(D)\}$ is not empty (as $(y_b,x_0)\in A(D)$). Denote by $t=\min\{j\in[1,\ m-1]\ |\ (y_{a+j},x_0)\in A(D)\}$, then $(y_{a+t},x_0)\in A(D)$ and $(y_{a+t-1},x_0)\notin A(D)$, since $x_0$ and $y_{a+t-1}$ are adjacent in $D$, we have $(x_0, y_{a+t-1})\in A(D)$ and thus $(x_0$, $y_{a+t-1}$, $y_{a+t}$, $x_0)$ is a 3-cycle in $D$.




In a completely similar way, we can prove the existence of the other 3-cycle.
\end{proof}

\begin{proposition}
\label{proposition no good pair then almost vertex pancyclic 1}
Let $D_1$ and $D_2$ be two digraphs with Hamiltonian cycles, $C_1=(x_0$, $x_1$, \ldots, $x_{n-1}$, $x_0)$ and $C_2=(y_0$, $y_1$, \ldots,  $y_{m-1}$, $y_0)$, respectively, and $D\in D_1\oplus D_2$. If $D$ is strong and has no good pair, then for each vertex $v\in V(G)$ and each $t\in [3,\ 2m_1]$, there is a cycle of length $t$  passing through $v$, where $m_1=\min\{n,\ m\}$.
\end{proposition}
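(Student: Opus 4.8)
The plan is to fix the vertex $v$ and, using the commutativity of the generalized sum together with a cyclic relabelling of the Hamiltonian cycles, to assume $v=x_0\in V(C_1)$. Lemma~\ref{lemma non-singular vertices, directed triangle} then supplies a $3$-cycle $(x_0,y_r,y_{r+1},x_0)$ through $v$, which is both the length-$3$ case and the seed for all odd lengths. Throughout I would work with the parallel classes of Remarks~\ref{remark: existence parallel class} and~\ref{remark parallel class}: since $D$ has no good pair, whenever $x_s\to y_t$ is an exterior arc the whole class $\{x_{s+i}\to y_{t-i}\}$ lies in $D$, and dually for arcs $y_t\to x_s$. The single structural fact that organizes the argument is that, modulo $g:=\gcd(n,m)$, the direction of the exterior arc between $x_i$ and $y_j$ depends only on $(i+j)\bmod g$; equivalently the exterior arcs split into exactly $g$ parallel classes, each contained in $(D_1,D_2)$ or in $(D_2,D_1)$ by Remark~\ref{remark parallel class}(iv). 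Strongness forces both kinds to occur, so $g\ge 2$.

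For the odd lengths I would use a single spiral. From the seed $3$-cycle and the class $P_{x_0\to y_r}$, the arc $x_a\to y_{r-a}$ is available for each $a\ge 0$, so
\[
C^{(a)}=(x_0,x_1,\ldots,x_a,\;y_{r-a},y_{r-a+1},\ldots,y_{r+1},\;x_0)
\]
is a cycle: the two runs go along $C_1$ and $C_2$, the descent arc $x_a\to y_{r-a}$ comes from the parallel class, and $y_{r+1}\to x_0$ is the arc from the seed. Its length is $2a+3$, and distinctness of the vertices needs only $a+1\le n$ and $a+2\le m$; letting $a$ run up to $\min\{n-1,m-2\}$ yields every odd length from $3$ to at least $2m_1-1$, where $m_1=\min\{n,m\}$.

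The even lengths are the crux, since this spiral is parity-preserving and needs a different seed. Here I would combine the two triangles of Lemma~\ref{lemma non-singular vertices, directed triangle}: the first gives a residue $r$ with $x_0\to y_r$ and $y_{r+1}\to x_0$ (a ``descent''), and the second, translated through the parallel classes, gives a residue with $y_j\to x_0$ and $x_0\to y_{j+1}$ (an ``ascent''), with $j\not\equiv r\pmod g$. These combine into the $4$-cycle $(x_0,\;y_r,\;x_1,\;y_j,\;x_0)$, using $x_0\to y_r$, $y_r\to x_1$, $x_1\to y_j$ and $y_j\to x_0$, each direction being forced by the descent/ascent residues. This $4$-cycle is the seed of the even family, and the top value $2m_1$ is realised by a cycle that alternates between the two summands, using exactly $m_1$ vertices of each; this alternating configuration is precisely what caps the attainable length at $2\min\{n,m\}$ and explains the bound in the statement.

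The main obstacle is exactly the even case: propagating the $4$-cycle seed through \emph{all} even lengths up to $2m_1$. Because the parity-preserving spiral cannot be reused verbatim, one is forced to analyse the sign pattern $(i+j)\bmod g$ directly, and the extremal ``bipartite-type'' pattern, in which the classes separate the pairs $x_i,y_j$ according to the parity of $i+j$ (forcing $g$ even), must be handled by cycles that use four exterior arcs and strictly alternate between $C_1$ and $C_2$, rather than the two-arc spirals that suffice in every other case. Once both families are in hand they cover all of $[3,2m_1]$ through $v$, which is the assertion.
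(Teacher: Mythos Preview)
Your odd-length construction is correct and is essentially the paper's family $\alpha_h^t$ (with the roles of the two runs interchanged); the length count and the distinctness bounds are right, and the reduction to $v=x_0$ via commutativity is legitimate.

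The genuine gap is the even case. You produce only the two endpoints: a single $4$-cycle $(x_0,y_r,x_1,y_j,x_0)$ and the alternating $2m_1$-cycle. Everything in between---lengths $6,8,\ldots,2m_1-2$---is left to an unperformed ``analysis of the sign pattern $(i+j)\bmod g$'', together with an assertion that the extremal bipartite pattern needs four exterior arcs. That is a description of a difficulty, not a resolution of it; no family of even-length cycles is actually written down, and it is not clear from what you say how the four-exterior-arc cycles should be parametrised so as to hit every even length through $v$. In particular, the $4$-cycle you build uses the arcs $x_0\to y_r$, $y_r\to x_1$, $x_1\to y_j$, $y_j\to x_0$, and there is no evident way to lengthen it by inserting a $C_1$- or $C_2$-segment without destroying one of those four forced directions; this is exactly the step that has to be engineered.

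The paper sidesteps the whole residue analysis with a single concrete move: after normalising to the $3$-cycle $(x_0,y_0,y_1,x_0)$, it simply looks at the one arc between $x_0$ and $y_2$. If $x_0\to y_2$, a four-exterior-arc family $\beta_h^t$ of length $2h+4$ (built from the three parallel classes $P_{x_0\to y_0}$, $P_{y_1\to x_0}$, $P_{x_0\to y_2}$) gives every even length in $[4,2m]$. If $y_2\to x_0$, the two classes $P_{x_0\to y_0}$ and $P_{y_2\to x_0}$ already give a two-exterior-arc family $\gamma_h^t$ covering $[4,2m-2]$, and the alternating cycle supplies $2m$. No global sign-pattern classification, and no special ``bipartite'' case, is needed. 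I would recommend replacing your residue discussion with this one-arc case split; it turns the ``main obstacle'' into two explicit one-parameter families.

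A minor stylistic point: rather than fixing $v=x_0$ and rerunning the argument by symmetry for $v\in V(C_2)$, the paper builds each cycle so that it passes through both some $x_t$ and some $y_{-t}$, and then lets $t$ range over $[0,n-1]$ (with $n\ge m$). This covers all vertices of both summands in one pass and avoids having to swap the roles of $n$ and $m$.
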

\begin{proof}
Suppose w.l.o.g. that $n\geq m$ and let $l=\lcm(n,\ m)$.

By Lemma \ref{lemma non-singular vertices, directed triangle}, there exists $j\in [0,\  m-1]$ such that $(x_0$, $y_{j}$, $y_{j+1}$, $x_0)$ is a 3-cycle. Assume w.l.o.g. $j=0$, this is $(x_0$, $y_0$, $y_1$, $x_0)$ is a 3-cycle in $D$.

By Remark \ref{remark: existence parallel class},  $P_{x_0\to y_0}\subseteq A(D)$ and $P_{y_{1}\to x_0}\subseteq A(D)$ 
(Figure \ref{fig plenty 3-cycles}).

\begin{figure}[!h]
\begin{center}
\includegraphics[width=\textwidth]{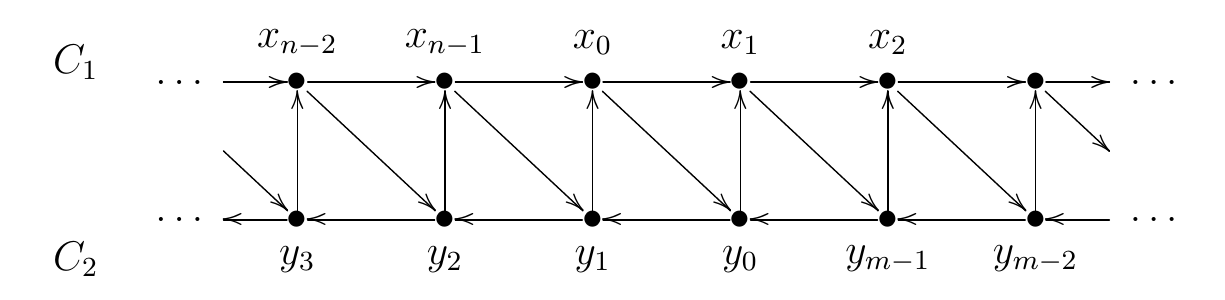}
\caption{Parallel classes of $(x_0,y_0)$ and $(y_{1},x_0)$.} 
\label{fig plenty 3-cycles}
\end{center} 
\end{figure}

\begin{claim}
\label{parallel negative}
The arc $y_{-j}\to x_{j+1}$ is in $P_{y_{1}\to x_0}$ for each $j\geq 0$.
\end{claim}
\begin{proof}[Proof of Claim \ref{parallel negative}]
By Definition \ref{definition parallel}, $P_{y_{1}\to x_0}=\{y_{1+i}\to x_{-i}\ |\ 0\leq i\leq l-1\}$.

\begin{enumerate}[{Case }1:]
\item{$j\leq l-1$.} Take $i=l-1-j$. As $1\leq 1+j \leq l$, we have that $0\leq i = l-1-j\leq l-1$. Hence, $(y_{1+i},\ x_{-i})=(y_{1+(l-1-j)},\ x_{-(l-1-j)})=(y_{-j},\ x_{1+j})$ is an arc in $P_{y_{1}\to x_0}$. 

\item{$j\geq l$.} By Euclidean algorithm, there exist $q'\geq 0$ and $r'\in[0,\ l-1]$ such that $j=q'l+r'$. Then $(y_{-j},\ x_{j+1})=(y_{-(q'l+r')},\ x_{q'l+r'+1})=(y_{-r'},\ x_{r'+1})$, as $l=\lcm(n,\ m)$. Since $0\leq r' \leq l-1$ we have that $(y_{-r'},\ x_{r'+1})\in P_{y_{1}\to x_0}$, by Case 1.
\end{enumerate} 
\end{proof}

For each  $t\in[0,\ n-1]$ and each $h\in[0,\ m-2]$, consider the cycle $\alpha_{h}^t$ $=$ $(x_t,\ y_{-t})$ $\cup$ $(y_{-t},\ C_2,\ y_{-t+h+1})$ $\cup$ $(y_{-t+h+1},\  x_{t-h})$ $\cup$ $(x_{t-h},\ C_1,\ x_t)$.

For each $h\in [0,\ m-2]$, we have the cycle $\alpha_{h}^t$ with length $l(\alpha_{h}^t)=1+(h+1)+1+h=2h+3$. Therefore, for each $t\in[0,\ n-1]$, there are cycles of each odd length from 3 to $2(m-2)+3=2m-1$ passing through $x_t$ and $y_{-t}$.
Recall that $n\geq m$, so we have proved that for each $v\in V(D)$ there  are cycles of each odd length in $[3,\ 2m-1]$ passing through $v$.

\begin{enumerate}[{Case }1:]
\item{$(x_0, y_{2})\in A(D)$.}
Then $P_{x_0 \to y_{2}}\subseteq A(D)$, where $P_{x_0 \to y_{2}}=\{x_i\to y_{2-i}\ |\ 0\leq i\leq l-1\}$. 

For each  $t\in[0,\ n-1]$ and each $h\in[0,\ m-2]$, consider the cycle $\beta_{h}^t$ $=$ $(x_t,\ y_{2-t})$ $\cup$ $(y_{2-t},\ C_2,\ y_{2-t+h})$ $\cup$ $(y_{2-t+h},\  x_{-1+t-h})$ $\cup$ $(x_{-1+t-h},\ C_1,\ x_{t-1})$ $\cup$ $(x_{t-1}, y_{-t+1}, x_t)$ (Figure \ref{fig beta cycle}).

\begin{figure}[!h]
\begin{center}
\includegraphics[width=\textwidth]{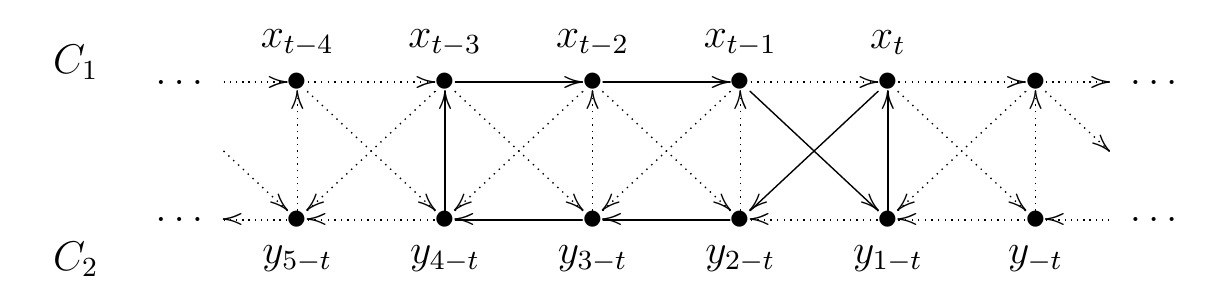}
\caption{A $\beta_2^t$ cycle in $D$.} 
\label{fig beta cycle}
\end{center} 
\end{figure}

For each $h\in [0,\ m-2]$, we have the cycle $\beta_{h}^t$ with length $l(\beta_{h}^t)=1+h+1+h+2=2h+4$. Therefore, for each $t\in[0,\ n-1]$, there are cycles of each even length from 4 to $2(m-2)+4=2m$ passing through $x_t$ and $y_{2-t}$.
As $n\geq m$, we have proved that for each $v\in V(D)$ there are cycles of each even length in $[4,\ 2m]$ passing through $v$.

\item{$(y_{2},x_0)\in A(D)$.}
Then $P_{y_{2}\to x_0}\subseteq A(D)$, where $P_{y_{2}\to x_0}=\{y_{2+i}\to x_{-i}\ |\ 0\leq i\leq l-1\}$. 

For each  $t\in[0,\ n-1]$ and each $h\in[0,\ m-3]$, consider the cycle $\gamma_{h}^t$ $=$ $(x_t,\ y_{-t}) \cup (y_{-t},\ C_2,\ y_{-t+h+2}) \cup (y_{-t+h+2},\  x_{t-h}) \cup (x_{t-h},\ C_1,\ x_t)$.

For each $h\in [0,m-3]$, we have the cycle $\gamma_{h}^t$ with length $l(\gamma_{h}^t)=1+(h+2)+1+h=2h+4$. Therefore, for each $t\in[0,\ n-1]$, there are cycles of each even length from 4 to $2(m-3)+4=2m-2$ passing through $x_t$ and $y_{-t}$.
So, we proved that for each $v\in V(D)$ there are cycles of each even length in $[4,\ 2m-2]$ passing through $v$.
\end{enumerate}

Finally, we show that for each $v\in V(D)$ there is a cycle of length $2m$ passing through $v$.

For each  $t\in[0,\ n-1]$, take $\varepsilon^t$ $=$ $(y_{-(t-1)}$, $x_t$, $y_{-t}$, $x_{t+1}$, $y_{-(t+1)}$, $x_{t+2}$, \ldots, $y_{-(t+m-2)}$, $x_{t+m-1}$, $y_{-(t+m-1)})$, which is a cycle as $y_{-(t+m-1)}=y_{-(t-1)}$, moreover,  $l(\varepsilon^t)=2m$. 

Then, for each $t\in[0, n-1]$, there is a cycle of length $2m$ passing through $x_t$ and $y_{-(t-1)}$. We conclude, for each $v\in V(D)$, that there are cycles of every length in $[3,\ 2m]$ passing through $v$.

\end{proof}

\begin{proposition}
\label{proposition no good pair then almost vertex pancyclic 2}
Let $D_1$ and $D_2$ be two digraphs with Hamiltonian cycles, $C_1=(x_0$, $x_1$, \ldots, $x_{n-1}$, $x_0)$ and $C_2=(y_0$, $y_1$, \ldots,  $y_{m-1}$, $y_0)$, respectively, and $D\in D_1\oplus D_2$. If $D$ is strong and contains no good pair, then for each vertex $v\in V(G)$ there is a cycle, of each length in $[m_1+2,\ n+m]$ passing through $v$, where $m_1=\min\{n,\ m\}$.
\end{proposition}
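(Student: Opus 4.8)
The plan is to produce, for each target length $\ell\in[m+2,\,n+m]$ and each vertex $v$, an explicit cycle through $v$ built from the two parallel classes the hypotheses provide. Assume without loss of generality that $n\ge m$, so $m_1=m$, and (as in the proof of Proposition \ref{proposition no good pair then almost vertex pancyclic 1}, via Lemma \ref{lemma non-singular vertices, directed triangle}) that $(x_0,y_0,y_1,x_0)$ is a $3$-cycle, so that the whole classes $P_{x_0\to y_0}$ and $P_{y_1\to x_0}$ lie in $D$. Writing $g=\gcd(n,m)$, a short computation from Definition \ref{definition parallel} shows that $P_{x_0\to y_0}$ consists of \emph{all} arcs $x_a\to y_b$ with $a+b\equiv 0\pmod g$, and $P_{y_1\to x_0}$ of all arcs $y_b\to x_a$ with $a+b\equiv 1\pmod g$. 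In particular each $x_i$ has $m/g$ out-neighbours in $C_2$ inside $P_{x_0\to y_0}$; it is this abundance of arcs (not merely the single diagonal arc $x_i\to y_{-i}$) that will create the room to hit every length. Since one class runs from $C_1$ to $C_2$ and the other the opposite way, residues $0$ and $1$ are distinct, i.e. $g\ge 2$.

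First I would fix a three-part template anchored at a free index $t$: a \emph{zigzag} $x_t\to y_{-t}\to x_{t+1}\to y_{-(t+1)}\to\cdots\to y_{-(t+s-1)}\to x_{t+s}$ that threads in $s$ consecutive vertices of $C_2$ using only diagonal arcs of the two classes; a plain \emph{rail} $x_{t+s}\to x_{t+s+1}\to\cdots\to x_{t+s+p}$ along $C_1$; and a single \emph{closing rung} $x_{t+s+p}\to y_b\to x_t$ returning to the start through one further vertex of $C_2$. The zigzag arcs lie in $P_{x_0\to y_0}$ and $P_{y_1\to x_0}$ automatically, and the closing rung does too provided $(t+s+p)+b\equiv 0$ and $t+b\equiv 1\pmod g$, whose compatibility is the single condition $s+p\equiv-1\pmod g$; one then takes any $b\equiv 1-t\pmod g$. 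This template uses $s+p+1$ vertices of $C_1$ and $s+1$ of $C_2$ and has length $2s+p+2$. Note it uses no arc of $C_2$ at all and only residues $0$ and $1$, so no case analysis on other residue classes is needed.

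Next I would realize each length. Setting $q=s+p$, the length is $\ell=s+q+2$ subject to $q\equiv-1\pmod g$, $0\le s\le m-1$ (at most $m$ vertices of $C_2$), and $q+1\le n$ (distinct $C_1$-vertices). Eliminating $s=\ell-q-2$ reduces everything to finding an integer $q\equiv-1\pmod g$ in the interval $\bigl[\max(\lceil(\ell-2)/2\rceil,\ \ell-m-1),\ \min(\ell-2,\ n-1)\bigr]$. For $\ell$ away from the two ends of the range this interval has length at least $m-1\ge g-1$ and so meets the class $-1\bmod g$; the extreme $\ell=n+m$ gives $q=n-1\equiv-1\pmod g$ since $g\mid n$, and the small cases near $\ell=m+2$ are checked directly. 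The free anchor $t$ then places $v$: if $v=x_{p_0}$ take $t=p_0$; if $v=y_{q_0}$ take $t=-q_0$ so that $y_{q_0}$ is the first zigzag vertex (or, in the forced case $s=0$, make $y_{q_0}$ the closing vertex by choosing $t\equiv1-q_0\pmod g$). Finally one checks the $s+1$ vertices of $C_2$ are distinct: the zigzag occupies the consecutive indices $-t,\dots,-(t+s-1)$, and the closing index, being $\equiv1-t\pmod g$, can be chosen outside this block; the only delicate case is $g=m$, where $m/g=1$ forces $b\equiv1-t$ uniquely, but there the unique candidate is exactly the one index the zigzag omits.

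I expect the main obstacle to be precisely this last feasibility bookkeeping: guaranteeing \emph{simultaneously} that the interval for $q$ meets the residue class $-1\bmod g$, that $s$ stays in $[0,m-1]$ and $q$ in $[0,n-1]$, and that the closing vertex avoids the zigzag block, uniformly over all of $[m+2,n+m]$ and all vertices $v$. These are finitely many modular and interval checks with a few genuinely tight edge cases (lengths near $m+2$ and near $n+m$, and the divisor case $g=m$), so the difficulty lies in organizing the verification rather than in any single hard step.
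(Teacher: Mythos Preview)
Your plan is sound and would yield a correct proof, but it takes a more elaborate route than the paper's. Both arguments use the same skeleton---a zigzag on the ``diagonal'' arcs of $P_{x_0\to y_0}$ and $P_{y_1\to x_0}$, then a rail along $C_1$, then a single return arc into $C_2$---but they close the cycle differently. You describe the full parallel class via $g=\gcd(n,m)$, allow a free closing vertex $y_b$ subject to the residue condition $s+p\equiv -1\pmod g$, and then must verify that the admissible interval for $q=s+p$ meets the class $-1\bmod g$ for every target length (it does: $q=m-1$ handles $\ell\in[m+2,\min(2m,n+1)]$, $q=n-1$ handles $\ell\in[n+1,n+m]$, and in between the interval contains $m$ consecutive integers). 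The paper sidesteps all of this by closing the cycle at the \emph{same} $C_2$-vertex $y_{-t}$ where the zigzag began: since $y_{-(t+im)}=y_{-t}$, the diagonal arc $x_{t+im}\to y_{-t}$ is available for every $i$, so writing $n=qm+r$ one gets cycles $\beta_{i,j}^t$ of length $im+j+1$ and $\gamma_j^t$ of length $n+j+1$, and these two one-parameter families sweep $[m+2,n+m]$ with no gcd computation, no residue feasibility check, and no separate argument that $y_b$ avoids the zigzag block. Your deduction that the hypotheses force $g\ge 2$ is correct and pleasant, but the paper's argument never needs it; importing the periodicity idea $y_{-(t+im)}=y_{-t}$ would let you drop most of the bookkeeping you (rightly) flag as the main obstacle.
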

\begin{proof}

If $n=m$, then Proposition \ref{proposition no good pair then almost vertex pancyclic 1} asserts that for each $v\in V(D)$ there is a cycle of each length in $[3,\ 2m]=[3,\ n+m]$ passing through $v$. 
Hence, we assume $n\neq m$. 
Suppose w.l.o.g. that $n > m$ and let $l$ be the least common multiple of $n$ and $m$, $l=\lcm(n,\ m)$. 

By Lemma \ref{lemma non-singular vertices, directed triangle}, there exists $j\in [0,\ m-1]$ such that $(x_0$, $y_{j}$, $y_{j+1}$, $x_0)$ is a 3-cycle in $D$. Assume w.l.o.g. that $j=0$, this is $(x_0$, $y_0$, $y_1$, $x_0)$ is a 3-cycle in $D$.

By Remark \ref{remark: existence parallel class},  $P_{x_0\to y_0}$ and $P_{y_{1}\to x_0}\subseteq A(D)$, where $P_{x_0\to y_0}=\{x_i\to y_{-i}\ |\ 0\leq i\leq l-1\}$ and $P_{y_{1}\to x_0}=\{y_{1+i}\to x_{-i}\ |\ 0\leq i\leq l-1\}$ 
(Figure \ref{fig plenty 3-cycles}) and, by Claim \ref{parallel negative} in the proof of Proposition \ref{proposition no good pair then almost vertex pancyclic 1}, we have that, for each $t\in [0,\ n-1]$, $y_{-t}\to x_{t+1}$ is an arc in $P_{y_1\to x_0}$.

For each $t\in [0,\ n-1]$ and each $j\in [1,\ m-1]$, define a path $P_{j}^t$ as follows: $P_{j}^t=(y_{-t}, x_{t+1}$, $y_{-(t+1)}$, $x_{t+2}$, \ldots, $y_{-(t+j)}$, $x_{t+j+1})\cup (x_{t+j+1},\ C_1,\ x_{t+m})$. It has length $l(P_{j}^t)= (2j+1)+(m-j-1)=m+j$.

Since $n > m$ we have, by Euclidean algorithm, that there exist $q\geq 1$ and $r\in [0,\ m-1]$ such that $n=qm+r$.

For each $t\in [0,\ n-1]$, consider the arcs of the form $x_{t+im}\to y_{-(t+im)}$ in $P_{x_0\to y_0}$, where $1\leq i \leq q$. As $C_2$ has length $m$ and we take subscripts modulo $m$, for each $i\in[1,\ q]$, $y_{-(t+im)}=y_{-t}$, moreover, the fact that $n=qm+r$ with $q\geq 1$ and $r\in [0,\  m-1]$, implies that the vertices $x_{t+m}$, $x_{t+2m}$, \ldots, $x_{t+qm}$ are $q$ different vertices in $C_1$, and thus the arcs  $x_{t+m}\to y_{-t}$, \ldots, $x_{t+qm}\to y_{-t}$ are $q$ different arcs in $P_{x_0\to y_0}$.



Now, for each $t\in [0,\ n-1]$, each $i\in[1,\ q]$ and each $j\in [1,\ m-1]$, define a cycle $\beta_{i,j}^t$ as follows:
$\beta_{i,j}^t$ $=$ $P_{j}^t \cup (x_{t+m},\ C_1,\ x_{t+im})$ $\cup$ $(x_{t+im},\ y_{-t})$. It has length $l(\beta_{i,j}^t)=(j+m)+(im-m)+1=im+j+1$ and passes through $y_{-t}$ and $x_{t+1}$. Notice that, as $n>m$ and $0\leq t \leq n-1$, we have that for each $v\in V(D)$ and each $h\in [m+2,\ (q+1)m]=[m+2,\ n-r+m]$ there is a cycle of length $h$ passing through $v$.

Finally, for each $t\in [0,\ n-1]$ and each $j\in [1,\ m-1]$, define a cycle $\gamma_{j}^t$ as follows:
$\gamma_j^t$ $=$ $P_{j}^t \cup (x_{t+m},\ C_1,\ x_{t})$ $\cup$ $(x_{t},\ y_{-t})$. It has length $l(\gamma_j^t)=(m+j)+(n-m)+1=n+j+1$ and passes through $y_{-t}$ and $x_{t+1}$. And, since $n>m$ and $0\leq t \leq n-1$, we have that for each $v\in V(D)$ and each $h\in [n+2,\ n+m]$ there is a cycle of length $h$ passing through $v$, concluding the proof.

\end{proof}

As a direct consequence of Propositions \ref{proposition no good pair then almost vertex pancyclic 1} and \ref{proposition no good pair then almost vertex pancyclic 2} we have the following:

\begin{theorem}
\label{theorem strong and no good pair then vertex pancyclic}
Let $D_1$ and $D_2$ be two digraphs with Hamiltonian cycles, $C_1=(x_0$, $x_1$, \ldots, $x_{n-1}$, $x_0)$ and $C_2=(y_0$, $y_1$, \ldots,  $y_{m-1}$, $y_0)$, respectively, and $D\in D_1\oplus D_2$. If $D$ is strong and contains no good pair, then $D$ is vertex-pancyclic.
\end{theorem}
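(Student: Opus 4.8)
The plan is to read Theorem \ref{theorem strong and no good pair then vertex pancyclic} off the two preceding propositions and verify that, for every vertex, the cycle lengths they guarantee together exhaust the full admissible range. First I would record that a digraph $D \in D_1 \oplus D_2$ has order $|V(D)| = n + m$, so that by definition $D$ is vertex-pancyclic precisely when, for each $v \in V(D)$ and each $k \in [3,\ n+m]$, there is a cycle of length $k$ passing through $v$. Writing $m_1 = \min\{n,\ m\}$, Proposition \ref{proposition no good pair then almost vertex pancyclic 1} supplies, through each fixed $v$, cycles of every length in $[3,\ 2m_1]$, while Proposition \ref{proposition no good pair then almost vertex pancyclic 2} supplies, through each fixed $v$, cycles of every length in $[m_1 + 2,\ n+m]$. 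Both hypotheses needed—strong connectivity and the absence of a good pair—are exactly those assumed in the theorem, so both propositions apply directly.

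It then remains to check the purely arithmetic fact that these two integer intervals cover $[3,\ n+m]$. The first interval starts at $3$ and the second ends at $n+m$, so the only integers that could fail to be covered are those $k$ with $2m_1 < k < m_1 + 2$. Since $m_1 = \min\{n,\ m\} \geq 1$ (indeed the Hamiltonian cycles $C_1,C_2$ force $n,m\geq 2$), we have $m_1 + 2 \leq 2m_1 + 1$, so no such integer $k$ exists; hence $[3,\ 2m_1] \cup [m_1 + 2,\ n+m] = [3,\ n+m]$. Consequently every length in $[3,\ n+m]$ is realized by a cycle through each fixed vertex $v$, which is exactly the statement that $D$ is vertex-pancyclic.

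I do not expect a serious obstacle at this stage: the argument is immediate once the two propositions are granted. The only point warranting any care is the boundary of the interval arithmetic—confirming that the two ranges overlap (or at worst abut) rather than leaving a gap around $k = m_1 + 1$ or $k = 2m_1 + 1$—but the inequality $m_1 + 2 \leq 2m_1 + 1$ settles this uniformly for all admissible $n$ and $m$. All of the genuine combinatorial work has already been carried out in establishing Propositions \ref{proposition no good pair then almost vertex pancyclic 1} and \ref{proposition no good pair then almost vertex pancyclic 2}, where the explicit cycles are built from the parallel-class structure forced by the hypothesis of no good pair together with the $3$-cycles furnished by Lemma \ref{lemma non-singular vertices, directed triangle}.
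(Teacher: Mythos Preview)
Your proposal is correct and matches the paper's approach exactly: the paper states Theorem~\ref{theorem strong and no good pair then vertex pancyclic} as a ``direct consequence'' of Propositions~\ref{proposition no good pair then almost vertex pancyclic 1} and~\ref{proposition no good pair then almost vertex pancyclic 2} without further elaboration. You have simply made explicit the interval-covering verification $[3,\,2m_1]\cup[m_1+2,\,n+m]=[3,\,n+m]$ that the paper leaves implicit.
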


In Section \ref{main results}, we will extend this result to a strong digraph $D$ in the g.s. of $k$ vertex disjoint Hamiltonian digraphs. First we will prove Lemma \ref{lemma 3-cycle vertex-pancyclic} and then we will define an anti-directed cycle, and observe that a good pair is an anti-directed 4-cycle with some properties.

\begin{lemma}
\label{lemma 3-cycle vertex-pancyclic}
Let $D_1$, $D_2$, $D_3$ be three vertex disjoint digraphs with Hamiltonian cycles, $C_1$, $C_2$, $C_3$, respectively, and $D\in \oplus_{i=1}^3 D_i$. If there is a sequence $\{i_j\}_{j=1}^3$ such that $D_{i_1} \mapsto D_{i_2}$, $D_{i_2} \mapsto D_{i_3}$ and $D_{i_3} \mapsto D_{i_1}$ in $D$, then $D$ is vertex-pancyclic.
\end{lemma}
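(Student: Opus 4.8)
The plan is to exploit the cyclic domination to route a single directed cycle through all three summands, tuning its length by means of sub-paths of the three Hamiltonian cycles. After relabelling I may assume $(i_1,i_2,i_3)=(1,2,3)$, so that $D_1\mapsto D_2$, $D_2\mapsto D_3$ and $D_3\mapsto D_1$. Put $n_i=|V(D_i)|$ and $n=n_1+n_2+n_3=|V(D)|$, and note $n\geq 3$. Fix a vertex $v$ and a length $k\in[3,n]$; say $v\in V(D_j)$.

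The first step is a length decomposition: every integer $k\in[3,n]$ can be written as $k=p_1+p_2+p_3$ with $p_i\in[1,n_i]$. This holds because each $p_i$ ranges over the consecutive integers $1,\ldots,n_i$, so their sum ranges over every integer from $3$ to $n_1+n_2+n_3=n$ without gaps. Fix such $p_1,p_2,p_3$.

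The second step builds and glues three sub-paths. For each $i$ choose a directed sub-path $P_i$ of $C_i$ with exactly $p_i$ vertices, taking $P_j$ (the one lying in $v$'s summand) so that it contains $v$; for instance, start $P_j$ at $v$ and follow $C_j$ for $p_j-1$ further vertices (if $p_j=n_j$ this is $C_j$ with one arc deleted). Write $P_i=(a^i_1,\ldots,a^i_{p_i})$ for $i=1,2,3$. Because $D_1\to D_2$, $D_2\to D_3$ and $D_3\to D_1$, the three exterior arcs $a^1_{p_1}\to a^2_1$, $a^2_{p_2}\to a^3_1$ and $a^3_{p_3}\to a^1_1$ all belong to $D$. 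Since the $P_i$ lie in different summands, their vertex sets are pairwise disjoint, so concatenating $P_1,P_2,P_3$ along these three arcs yields a directed cycle of length $p_1+p_2+p_3=k$ passing through $v$.

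As $v\in V(D)$ and $k\in[3,n]$ were arbitrary, this produces a directed $k$-cycle through every vertex for every admissible length, so $D$ is vertex-pancyclic. I do not expect a genuine obstacle here: the only two points needing a word of care are that the three intervals $[1,n_i]$ cover all of $[3,n]$ under addition (so that no length is skipped) and that $P_j$ can be taken through $v$ with any prescribed number of vertices, both of which are immediate for sub-paths of a Hamiltonian cycle. The structural reason a single routing pattern suffices is that the hypothesis provides $\mapsto$ rather than merely $\to$: the absence of back-arcs prevents any cycle from using exactly two of the summands, so every relevant cycle in fact traverses $D_1$, $D_2$, $D_3$ in this cyclic order, which is exactly what the construction realises.
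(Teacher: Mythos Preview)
Your proof is correct and follows essentially the same route as the paper: both construct cycles by gluing subpaths of the three Hamiltonian cycles via the cyclic domination arcs. Your single decomposition $k=p_1+p_2+p_3$ with $p_i\in[1,n_i]$ is a cleaner packaging of what the paper does in three separate stages (first varying the length of the $C_3$-segment with the other two fixed at one vertex, then the $C_2$-segment, then the $C_1$-segment), but the underlying construction is identical.
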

\begin{proof}
Suppose w.l.o.g. that $D_{1} \mapsto D_{2}$, $D_{2} \mapsto D_{3}$ and $D_{3} \mapsto D_{1}$ in $D$.

Let $C_1=(x_0$, $x_1$, \ldots, $x_{n-1}$, $x_0)$, $C_2=(y_0$, $y_1$, \ldots,  $y_{m-1}$, $y_0)$ and $C_3=(w_0$, $w_1$, \ldots, $w_{t-1}$, $w_0)$ be the Hamiltonian cycles of $D_1$, $D_2$ and $D_3$, respectively.

As $D_{1} \mapsto D_{2}$, $D_{2} \mapsto D_{3}$ and $D_{3} \mapsto D_{1}$ in $D$ we have, for each $x\in V(C_1)$, each $y\in V(C_2)$ and each $w\in V(C_3)$ that $x\to y$, $y\to w$ and $w\to x$.

For each $i\in [0,\ n-1]$, each $j\in [0,\ m-1]$, each $h\in[0,\ t-1]$ and each $g\in [0,t-1]$, consider the cycle $\alpha(i,j,h,g)=(x_i$, $y_j$, $w_h) \cup (w_h$, $C_3$, $w_{h+g}) \cup (w_{h+g}$, $x_i)$, it has length $l(\alpha(i,j,h,g))=2+g+1=g+3$ and passes through $x_i$, $y_j$ and $w_h$. Then, for each $v\in V(D)$ and each length $l$ in $[3,\ t+2]$ there is a cycle of length $l$ passing through $v$.
    
For each $i\in [0,\ n-1]$, each $j\in [0,\ m-1]$ and each $g'\in[0,\ m-1]$, consider the cycle $\beta(i,j,g')=(x_i$, $y_j) \cup (y_j$, $C_2$, $y_{j+g'}) \cup (y_{j+g'}$, $w_0) \cup (w_0$, $C_3$, $w_{t-1}) \cup (w_{t-1}$, $x_i)$; it has length $l(\beta(i,j,g'))=1+g'+1+(t-1)+1=g'+t+2$ and passes through $x_i$, $y_j$ and each $w\in V(C_3)$. Then, for each $v\in V(D)$ and each length $l$ in $[t+2,\ m+t+1]$ there is a cycle of length $l$ passing through $v$.

For each $i\in [0,\ n-1]$ and each $g''\in [0,\ m-1]$, consider the cycle $\gamma(i,g'')=(x_i$, $C_1$, $x_{i+g''}) \cup (x_{i+g''}$, $y_0) \cup (y_0$, $C_2$, $y_{m-1}) \cup (y_{m-1}$, $w_0) \cup (w_0$, $C_3$, $w_{t-1}) \cup (w_{t-1}$, $x_i)$; it has length $l(\gamma(i,g''))=g''+1+(m-1)+1+(t-1)+1=g''+m+t+1$ and passes through $x_i$, each $y\in V(C_2)$ and each $w\in V(C_3)$. Then, for each $v\in V(D)$ and each length $l$ in $[m+t+1,\ n+m+t]$ there is a cycle of length $l$ passing through $v$.
    
Therefore, $D$ is vertex-pancyclic.
\end{proof}

Let $D$ be a digraph. A succession of vertices  $\mathcal{C}=v_0v_1\cdots v_{t-1}v_0$ is an \emph{anti-directed $t$-cycle} whenever $v_i\neq v_j$ for each $i\neq j$, $t$ is even and, for each $i\equiv 0 \pmod{2}$, $\{(v_i, v_{i+1})$, $(v_i, v_{i-1})\}\subset A(D)$ or $\{(v_i, v_{i+1})$, $(v_i, v_{i-1})\}\subset A(D)$. \\

We may assume that every anti-directed cycle starts with a forward arc, else we might relabel the subscripts. \\

From the definitions of good pair of arcs and anti-directed cycle we obtain the following remark.

\begin{remark}
\label{remark:good pair/anti-directed 4-cycle}
Let $D_1$ and $D_2$ be two vertex disjoint digraphs, $\alpha_1=(u_0$, $u_1$, \ldots, $u_{p-1}$, $u_0)$ and $\alpha_2=(v_0$, $v_1$, \ldots, $v_{q-1}$, $v_0)$ be two cycles in $D_1$ and $D_2$, respectively, and let $D \in D_1\oplus D_2$. If $u_s\to v_r$ and $v_{r-1}\to u_{s+1}$ is a good pair of arcs. Then $\mathcal{C}=u_s v_r v_{r-1} u_{s+1} u_s$ is an anti-directed 4-cycle whose arcs belong alternatively to $E_\oplus$ and $A(\alpha_1)\cup A(\alpha_2)$, namely $\{(u_s, v_r)$, $(v_{r-1}, u_{s+1})\}\subset E_\oplus$,  $(u_s, u_{s+1})\in A(\alpha_1)$ and $(v_{r-1}, v_r)\in A(\alpha_2)$.
\end{remark}

\begin{definition}
\label{def:good anti-directed cycle}
Let $D_1$, $D_2$, \ldots, $D_k$ be a collection of pairwise vertex disjoint digraphs, and $D\in \oplus_{i=1}^k D_i$. 
An anti-directed 4-cycle $\mathcal{C}= v_0 v_1 v_2 v_{3}v_0$ in $D$ will be called a \emph{good cycle} whenever at least one of the following conditions holds $\{(v_0,v_1)$, $(v_2,v_3)\}\subset E_\oplus$ or $\{(v_2,v_1)$, $(v_0,v_3)\}\subset E_\oplus$. 
\end{definition}

\section{Main results}
\label{main results}
In this section we generalize Theorem  \ref{theorem strong and no good pair then vertex pancyclic} for a strong digraph $D$ in the g.s. of $k$ vertex disjoint Hamiltonian digraphs.

\begin{theorem}
\label{theorem strong and no antidirected 4-cycle then vertex pancyclic}
Let $D_1$, $D_2$, \ldots, $D_k$ be a collection of $k\geq 2$ vertex disjoint digraphs with Hamiltonian cycles, $C_1$, $C_2$, \ldots, $C_k$, respectively, and $D\in \oplus_{i=1}^k D_i$. If $D$ is strong and contains no good cycle, then $D$ is vertex-pancyclic.
\end{theorem}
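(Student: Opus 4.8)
The plan is to argue by induction on the number $k$ of summands, repeatedly fusing summands into a single Hamiltonian block so as to lower $k$ while preserving all three hypotheses (being a g.s.\ of Hamiltonian digraphs, strongness, and the absence of a good cycle). The observation that legitimizes a fusion is that passing from a decomposition into $k$ summands to one in which two or three of them are merged into a single block only \emph{shrinks} the set $E_\oplus$ of exterior arcs. Since a good cycle requires two prescribed arcs to lie in $E_\oplus$, every good cycle for the coarser decomposition is already a good cycle for the original one, so ``no good cycle'' is inherited; $D$ is literally the same digraph throughout, so it stays strong; and condition (iii) between the new block and the remaining summands persists automatically, because distinct original summands already had exactly one arc between each cross pair. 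Thus a fusion is valid as soon as the new block is itself Hamiltonian.

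For the base case $k=2$, Remark \ref{remark:good pair/anti-directed 4-cycle} shows that a good pair between $C_1$ and $C_2$ produces a good cycle; hence ``no good cycle'' forces ``no good pair,'' and Theorem \ref{theorem strong and no good pair then vertex pancyclic} gives vertex-pancyclicity at once. In the inductive step I split on the arc pattern between summands. First I would search for a pair $D_i,D_j$ joined by exterior arcs in \emph{both} directions. Because each summand is internally strong (it carries a Hamiltonian cycle), one arc each way makes $H_{ij}:=D\langle V(D_i)\cup V(D_j)\rangle$ strong; by Remark \ref{remark subsuma} it lies in $D_i\oplus D_j$, and the same reasoning via Remark \ref{remark:good pair/anti-directed 4-cycle} shows $H_{ij}$ has no good pair, so Theorem \ref{theorem strong and no good pair then vertex pancyclic} makes $H_{ij}$ vertex-pancyclic, in particular Hamiltonian. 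Fusing $D_i,D_j$ into the Hamiltonian block $H_{ij}$ drops the count to $k-1$, and the induction hypothesis finishes this case.

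The remaining case, which I expect to be the main obstacle, is when no such pair exists, i.e.\ between every two summands all exterior arcs point the same way. Then the relation ``$D_i\mapsto D_j$'' orients a tournament $T$ on $\{1,\dots,k\}$, and since $D$ is strong every summand reaches every other along exterior arcs, so $T$ is a \emph{strong} tournament; this already forces $k\geq 3$. A strong tournament on at least three vertices contains a $3$-cycle $i\to j\to l\to i$, which unwinds to $D_i\mapsto D_j\mapsto D_l\mapsto D_i$. If $k=3$ this is exactly the hypothesis of Lemma \ref{lemma 3-cycle vertex-pancyclic}, so $D$ is vertex-pancyclic. If $k>3$, I apply Lemma \ref{lemma 3-cycle vertex-pancyclic} to $D\langle V(D_i)\cup V(D_j)\cup V(D_l)\rangle$ (a g.s.\ of the three by Remark \ref{remark subsuma}) to see it is vertex-pancyclic, hence Hamiltonian, and fuse these three summands into that block, dropping the count to $k-2\geq 2$ so the induction hypothesis applies. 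The delicate points are confined to this last case: one must verify that the ``all one-way'' situation genuinely yields a strong tournament and a usable $3$-cycle, and that each reduction strictly decreases $k$ while keeping the three hypotheses intact, so that the induction is well-founded.
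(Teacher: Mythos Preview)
Your proposal is correct and follows essentially the same route as the paper: induction on $k$, the same base case via Theorem~\ref{theorem strong and no good pair then vertex pancyclic}, and the same two-case split (bidirectional pair vs.\ one-way tournament) with fusion of summands into a Hamiltonian block. The only cosmetic difference is that in the tournament case with $k\geq 4$ the paper picks any non-Hamiltonian cycle in the tournament and concatenates Hamiltonian paths of the summands to build the block's Hamiltonian cycle directly, whereas you fix a $3$-cycle and invoke Lemma~\ref{lemma 3-cycle vertex-pancyclic} for Hamiltonicity; both work and the reduction is identical.
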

\begin{proof}
We proceed by induction on $k$.

When $k=2$, $D$ has no good pair with respect to $C_1$ and $C_2$, as $D$ contains no good cycle. Otherwise, by Remark \ref{remark:good pair/anti-directed 4-cycle}, $D$ would contain a good cycle. Hence, $D$ satisfies the hypothesis of Theorem \ref{theorem strong and no good pair then vertex pancyclic} and $D$ is vertex-pancyclic.

Now, assume that the assertion holds for each  $k'\in [2,\ k-1]$. We prove that the assertion holds for $k$.

Let $D_1$, $D_2$, \ldots, $D_k$ be a collection of $k$ vertex disjoint digaphs with Hamiltonian cycles, $C_1$, $C_2$, \ldots, $C_k$, respectively, and take $D$ a strong digraph with no good cycle in $\oplus_{i=1}^{k} D_i$.

\begin{enumerate}[{Case }1:]
\item There exist two different indices $\{j, j'\}\subset [1,\ k]$ such that $(D_j, D_{j'})\neq \emptyset$ and $(D_{j'}, D_{j})\neq \emptyset$ (therefore, $D_{0}=D\langle V(D_j)\cup V(D_{j'})\rangle$ is strong).

As $D_{0}$ is a subdigraph of $D$, it contains no good cycle. Hence, $D_0$ satisfies the induction basis and thus it is vertex-pancyclic. In particular, $D_0$ contains a Hamiltonian cycle $C_0$.

Let $J=[0,\ k]\setminus \{j$, $j'\}$ and observe that $\{D_i\}_{i\in J}$ is a collection of $k-1$ pairwise vertex disjoint Hamiltonian digraphs. Notice that $D\in \oplus_{i\in J} D_i$ (as $D$ satisfies Definition \ref{definition g.s.}).

Therefore, by the inductive hypothesis, $D$ is vertex-pancyclic.

\item For each pair of different indices $\{i,j\}\subset [1,\ k]$, either $D_i \mapsto D_{j}$ or $D_{j} \mapsto D_{i}$ in $D$. 

Define the digraph $H$ with vertex set $V(H)=\{D_1$, $D_2$, \ldots, $D_k\}$ and $(D_i, D_{j})\in A(H)$ iff $D_i \mapsto D_{j}$ in $D$.  Clearly $H$ is a strong tournament and thus, for each $t\in[3,\ k]$, $H$ contains a cycle $\alpha_t$ of length $t$ \cite{Moon1966297}. 

If $k=3$, then $\alpha_t$ is a Hamiltonian cycle in $H$. Hence, $D$ satisfies the hypothesis of Lemma \ref{lemma 3-cycle vertex-pancyclic} and thus $D$ is vertex-pancyclic. 

If $k\geq 4$, then $H$ contains a non-Hamiltonian cycle. Let $\alpha_t=(D_{i_{0}}$, $D_{i_{1}}$, \ldots, $D_{i_{t-1}}$, $D_{i_0})$ be a cycle in $H$ of length $t$, for some $t\in[3,\ k-1]$.

Recall that $D_i$ contains a Hamiltonian cycle $C_i$, for each $i\in [1,\ k]$. Take, for each $j\in [0,\ t-1]$, a Hamiltonian path in $D_{i_j}$, namely $T_{i_j}$, and call $u_{j}$ and $v_j$ the initial and final vertices of $T_{i_j}$, respectively. Since $T_{i_j}\mapsto T_{i_{j+1}}$ for each $j\in[0,\ t-1]$, we have that $(v_j, u_{j+1})\in A(D)$ for each $j\in[0,\ t-1]$,  where the subscripts are taken modulo $t$.

Therefore, $\gamma= T_{i_{0}}\cup (v_0,u_1) \cup T_{i_{1}} \cup (v_1,u_2)\cup \cdots  \cup T_{i_{t-1}} \cup (v_{t-1}, u_0)$ is a cycle  with $V(\gamma)=\bigcup_{j=0}^{t-1} V(D_{i_j})$.

Let $D_0=D\langle \bigcup_{j=0}^{t-1} V(D_{i_j}) \rangle$ and $J=[0,\ k]\setminus \{i_0$, $i_1$, \ldots, $i_{t-1}\}$, then $D\in \oplus_{i\in J} D_i$ (recall that $\oplus$ is associative and $D$ satisfies Definition \ref{definition g.s.}).

Observe that $\{D_i\}_{i\in J}$ is a collection of $\vert J\vert= k+1-t$ vertex disjoint Hamiltonian digraphs, where $2\leq k+1-t < k$, and $D\in \oplus_{i\in J} D_i$ has no good cycle, since the set of exterior arcs of $D$ as g.s. of $\{D_i\}_{i\in J}$ is contained in the set of exterior arcs of $D$ as a g.s. of $D_1$, $D_2$, \ldots, $D_k$. Therefore, by the inductive hypothesis, $D$ is vertex-pancyclic.

\end{enumerate}

\end{proof}

In this paper we considered generalized sums of Hamiltonian digraphs containing no good pair (or its generalization, good cycle) and obtained conditions that are easy to check to determine if a g.s. is vertex-pancyclic. The study of the case when good pairs appear will be treated in a forthcoming paper.


\bibliography{Pancyclism-in-the-g.s.}


\end{document}